\tikzset{
commutative diagrams/.cd,
arrow style=tikz,
diagrams={>=latex}} 
\titleformat*{\section}{\centering\large\sc }
\titleformat{\subsection}[runin]{\bfseries}{\thesubsection.}{3pt}{}
\theoremstyle{definition}
\newtheorem{thm}{Theorem}[section]
\newtheorem{defn}[thm]{Definition}
\newtheorem{rke}[thm]{Remark}
\newtheorem{ex}[thm]{Example}
\newtheorem{lm}[thm]{Lemma}
\newtheorem*{ththm}{Theorem}
\DeclareMathOperator{\supp}{Supp}
\DeclareMathOperator{\sing}{Sing}
\newtheoremstyle{break} 
  {\topsep}{\topsep}%
  {\itshape}{}%
  {\bfseries}{}%
  {\newline}{}%
\theoremstyle{break}
\newcommand{\rrr}{\mathbf{R}}
\newcommand{\qqq}{\mathbf{Q}}
\newcommand{\zzz}{\mathbf{Z}}
\newcommand{\ccc}{\mathbf{C}}
\newcommand{\nnn}{\mathbf{N}}
\def\no{\noindent}
\begin{document}

\title{Toric reduction of singularities for Newton nondegenerate $p$-forms}
\author{Bilal Balo}
\date{}
\maketitle

\begin{abstract}
    \no We study a class of holomorphic $p$-forms satisfying nondegeneracy conditions expressed through their Newton polyhedron and called Newton nondegenerate (NND). We give a characterization of NND $p$-forms by their toric reduction of singularities defined through a regular refinement of their dual fan. We then present an application of this result to the study of singularities of $(n-1)$-forms on $\ccc^n$.
\end{abstract}
\setcounter{tocdepth}{1}
\tableofcontents
\vspace{1cm}

\nocite{cox2024toric}

\section*{Introduction}

\noindent The study of the reduction of singularities of dimension 2 foliations dates back to the middle of the 19th century, where it was used in particular cases. It was developed further by the likes of Bendixson \cite{bendixson} and Dulac \cite{Dulac}, but the general algorithm was only established by Seidenberg in \cite{seidenberg1968reduction}. A simplified proof was published later by Arno van den Essen in \cite{van2006reduction}. Dumortier resolved this problem in the $\mathcal{C}^{\infty}$-case \cite{Dumortier}. The modern formulation of this theorem is as follows: let $\mathcal{F}$ be a foliation defined on the germ $(\ccc^2,0)$ by a $1$-form $\omega = a(x,y)dx+b(x,y)dy$. The origin is called a \emph{singular point} of $\mathcal{F}$ if $a(0,0)=b(0,0)=0$. We associate by duality the vector field $\chi=b(x,y)\frac{\partial}{\partial x}-a(x,y)\frac{\partial}{\partial y}$ to the form $\omega$. Suppose that the origin is a singular point and consider the  matrix of the linear part of $\chi$ at the origin:

\begin{equation*}
    \mathcal{J}_0(\chi,x,y) = \begin{pmatrix}
         \frac{\partial b}{\partial x}(0) & -\frac{\partial a}{\partial x}(0) \\
        \frac{\partial b}{\partial y}(0) & -\frac{\partial a}{\partial y}(0)
    \end{pmatrix}.
\end{equation*}

\noindent It is a \emph{nonnilpotent} singularity if the above matrix is nonnilpotent. If in addition, the quotient of its eigenvalues does not belong to $\qqq_{>0}$, is is called a \emph{reduced} singularity. This extra arithmetic condition is called \emph{nonresonance}.

\medskip

\noindent This definition is independent from the chosen coordinates $(x,y)$ and from the generator $\omega$ 
of the foliation $\mathcal{F}$. Seidenberg's theorem guarantees the existence of a finite sequence of blowing ups of points whose composition $\pi : (V,E) \longrightarrow (\ccc^2,0)$ is such that the strict transform of $\mathcal{F}$ by $\pi$ has at most reduced singularities in the exceptional divisor $E$.

\medskip 

\noindent For codimension 1 foliations in higher dimensional spaces, nonnilpotent and reduced singularities have been generalized by Cano (see \cite{cano1998reduction} for an introduction) respectively as \emph{presimple} and \emph{simple} singularities. The reduction theorem to simple points for codimension 1 foliations on threefolds was proven by Cano in 2004 \cite{cano2004reduction}.

\medskip

\noindent The codimension 2 case for threefolds corresponds to foliations defined by vector fields and was first treated by Daniel Panazzolo in the real analytic setting in \cite{10.1007/s11511-006-0011-7}, then by Panazzolo and McQuillan in the complex framework \cite{mcquillan2013almost}. Higher dimensional cases are still open, most of the available literature focuses on codimension 1 foliations. These theorems have applications to dynamical systems, birational geometry and Mori theory (\cite{brunella2015birational} and \cite{cascini2021mmp}).

\medskip

\no The Newton nondegeneracy condition (NND), first introduced in the context of function singularity theory (see \cite{zbMATH03550860}, \cite{zbMATH03520545} and \cite{zbMATH03546027}), ties successfully together toric geometry, Newton polyhedra and the study of singularities: any NND affine hypersurface admits a resolution of singularities by one toric morphism. Generalizations to higher codimensions of this result are established in \cite{aroca2013torical}, \cite{tevelev2007compactifications} and \cite{aroca2024groebner}.

\medskip

\no In the same spirit, we are concerned by the reduction of singularities of $p$-forms by the use of only one toric morphism. A version of Newton nondegeneracy has been formulated for plane vector fields by Brunella and Miari in \cite{brunella1990topological} to study topological classification problems. In \cite{isabel1999singular}, without assuming any Newton nondegeneracy condition, the authors characterize germs of foliations on $(\ccc^2,0)$ whose singularities become reduced after applying a toric morphism defined via their Newton polyhedron. In \cite{esterov2005indices}, Esterov defines the Newton polyhedron of a germ of $1$-form in $(\ccc^n,0)$ and introduces $\ccc$-genericity, a variant of the NND condition, together with toric resolution techniques to establish a formula for the index of a $1$-form on a complete intersection singularity. In dimension three, topological classification results for vector fields were proven in \cite{alonso2015infinitesimal}, assuming a different NND condition.

\medskip

\no More recently, Molina-Samper introduced a definition of NND codimension 1 foliations using Newton polyhedra systems (see \cite{molina2019combinatorial}, \cite{molina2020newton} and \cite{molina2022global}) and gave a characterization of such foliations in terms of the existence of a logarithmic reduction of singularities of combinatorial nature: in particular, it is a sequence of blowing ups with smooth centers of the ambient space. 

\medskip

\no Building upon this work, we would want to contribute to the study of NND holomorphic $p$-forms with the approach relying on toric geometry. This point of view was initiated by Brunella and Miari in \cite{brunella1990topological} for plane vector fields. Given a NND $p$-form, we will define explicitly a toric reduction of singularities through the dual fan associated to its Newton polyhedron.

\medskip

\no This article is motivated by works on embedded resolution of singularities with one toric morphism: \cite{teissier2003valuations}, \cite{teissier2014overweight}, \cite{Teissier2023}, \cite{mourtada2023jet}. While the general case of this conjecture remains open, it has been resolved for plane curve singularities (see \cite{mourtada2017jet}, \cite{de2023resolving}). We aim to formulate and study similar results, but in the context of foliations instead of algebraic varieties. A step in this direction is the statement $2)$ of \cref{Reduction theorem} below, where we give a characterization of NND $p$-forms via toric resolution of singularities.

\begin{ththm}
   Let $\eta$ be a nonidentically zero holomorphic $p$-form defined on $\ccc^n$. We denote by $\Gamma = \Gamma(\eta)$ its Newton polyhedron and by $\Sigma$ a regular refinement of the dual fan of $\eta$. Let $\pi : X_\Sigma \longrightarrow \ccc^n$ be the associated proper birational toric morphism. For every affine chart $\mathcal{U}_\sigma$ of $X_\Sigma$ defined by a maximal cone $\sigma\in \Sigma$ with minimal generators $v_1,\dots,v_n$, set $t_k=\min_{I\in \Gamma(\eta)}\langle v_k,I \rangle$, where $k\in\{1,\dots,n\}$ and $T=(t_1,\dots,t_n)$ and denote by $Y=(y_1,\dots,y_n)$  the coordinates which are canonically associated to the vectors $v_1, \dots, v_n$ on $\mathcal{U}_\sigma$. Set $A = \{k\in\{1,\dots,n\}, t_k > 0\}$.
   
    \begin{enumerate}
        \item[$1)$] The pull-back $\pi^{*}\eta$ can be expressed locally in every affine chart $\mathcal{U}_\sigma$ as 
        \[\pi^{*}\eta(Y) = Y^T\left( \sum_{|K| =  p}  \overline{f}_K(Y) \frac{dY_K}{Y_K}\right)\]
        where each coefficient $\overline{f}_K(Y)$ is holomorphic and divisible by the monomial $Y_{K\setminus A} = \prod_{k\in K\setminus A} y_k$.
        \item[$2)$] $\eta$ is Newton nondegenerate if and only if in every affine chart $\mathcal{U}_\sigma$, the coefficients $\overline{f}_K(Y)$ of the pull-back $\pi^{*}\eta$ have no common zeroes.
    \end{enumerate}
    
\end{ththm}

\no For $n=2$, Seidenberg's reduction gives reduced singularities, which are, in particular, nonnilpotent. This can be generalized to $(n-1)$-forms on $\ccc^n$ which are NND as in the following \cref{n-1}.

\begin{ththm}
   Let $\eta$ be a nonidentically zero holomorphic $p$-form on $\ccc^n$. If we suppose that $\eta$ is NND and $p=n-1$, then the strict transform of $\eta$ by the toric morphism defined via a regular refinement of its dual fan has at worst nonnilpotent singularities.
\end{ththm}

\no In the first part of this paper, we generalize Newton polyhedron of $1$-forms to $p$-forms and define associated Newton nondegeneracy conditions. The second part gathers results from toric geometry which are needed to formulate and prove our main result: statement $2)$ in \cref{Reduction theorem} which characterizes NND $p$-forms. The proof of the theorem and the application to $(n-1)$-forms are detailed in the last part. We don't require any integrability condition.

\vspace{1cm}

\no \textbf{Acknowledgements.} The author wishes to thank Hussein Mourtada, Matteo Ruggiero and Beatriz Molina-Samper for discussions and suggestions related to this work.

\section{Newton nondegeneracy condition}

In this paper we adopt the convention that the set $\nnn$ contains $0$:
$\nnn=\{0,1,2,3,\ldots\}$. Let $n$ and $p$ be fixed integers such that $n\ge 2$ and $1\le p<n$. 

\subsection{Newton polyhedron of a $p$-form.} We denote by $X=(x_1,\dots,x_n)$ the standard coordinate system on $\ccc^n$.  We identify $n$-tuples in $\ccc^n$ with row matrices. The notation $\langle \cdot | \cdot \rangle$ stands for the usual dot product on $\ccc^n$. Let $N$ be an $n$-dimensional smooth complex analytic space and $P\in N$, for any local coordinates $Z=(z_1,\dots,z_n)$ centered at $P$ and any $n\times n$ matrix $M=(v_{ij})$ with entries $v_{ij}$ in $\nnn$, we set $Z^M = (z_1^{v_{11}}\cdots z_n^{v_{1n}}, \dots,z_1^{v_{n1}}\cdots z_n^{v_{nn}})$. If $I=(i_1,\dots,i_n)\in \nnn^n$ is a line matrix, we will identify $Z^I$ with the monomial $z_1^{i_1}\dots z_n^{i_n}$. We will also use these notations for points $c=(c_1,\dots,c_n)\in \ccc^n$, namely: $c^M=(c_1^{v_{11}}\cdots c_n^{v_{1n}}, \dots,c_1^{v_{n1}}\cdots c_n^{v_{nn}})$ and $c^I =c_1^{i_1}\dots c_n^{i_n} $. For any finite set $J$, the number of elements of $J$ is denoted by $|J|$. When $J\subset\{1,\dots,n\}$ has $q$ elements $j_1<\dots<j_q$, we set $Z_J=z_{j_1}\dots z_{j_q}$ and $dZ_J = dz_{j_1}\wedge \dots \wedge dz_{j_q}$. If $q=p$, the logarithmic $p$-differentials are:

\begin{equation*}
    \frac{dZ_J}{Z_J} = \frac{dz_{j_1}}{z_{j_1}}\wedge \dots \wedge \frac{dz_{j_p}}{z_{j_p}}.
\end{equation*}

\medskip

\noindent Let $\eta$ be a holomorphic $p$-form defined on $\ccc^n$. Taking $N=\ccc^n$ and $Z=X$ allows us to express $\eta$ in a logarithmic way in the standard coordinate system:

\begin{equation}\label{logway}
\eta(X) = \sum_{|J|=p}f_J(X)\frac{dX_J}{X_J}.
\end{equation}

\smallskip

\no In the above expression, the subscript $|J| = p$ means that the sum is indexed by all the subsets $J\subset \{1,\dots,n\}$ with $|J| = p$. The coefficients $f_J(X)$ are holomorphic functions on $\ccc^n$ such that $\frac{f_J(X)}{X_J}$ is holomorphic. Expanding the functions $f_J(X)$ into power series, we obtain
\begin{equation}\label{powseries}
\eta(X) = \sum_{|J| = p} \left(\sum_{I\in \nnn^n} a_{I,J}X^{I}\right)\frac{dX_J}{X_J}, 
\end{equation}
where the coefficients $a_{I,J}$ are complex numbers and $X_J$ appears in every nonzero monomial $a_{I,J}X^{I}$. This means that if a coordinate of $I$ indexed by $J$ is zero, then $a_{I,J}=0$.  

\medskip

\noindent For any subset $A\subset \nnn^n$, we define $\eta_{A}$ by restricting the exponents in the expression of $\eta$ to $A$ :
\begin{equation*}
\eta_A(X) = \sum_{|J| = p} \left(f_J\right)_{|A}(X)\frac{dX_J}{X_J},
\end{equation*}

\noindent where 
\begin{equation*}
    \left(f_J\right)_{|A}(X)=\sum_{I\in A} a_{I,J}X^{I}.
\end{equation*}

\medskip

\noindent We then define the \emph{support} $\supp \eta$ of $\eta$: 
\begin{equation*}
    \supp \eta = \left\{I\in\nnn^n \mid \exists J\subset\{1,\dots,n\}, |J|=p \text{  and  }  a_{I,J}\ne 0\right\}.
\end{equation*}

\noindent \cref{Newtonpolyhedron} and \cref{nnd} below draw both from \cite{brunella1990topological}  and \cite{esterov2005indices}.

\begin{defn} \label[defn]{Newtonpolyhedron}
Let $B\subset \rrr_{\ge 0}^n$. The \emph{Newton polyhedron} of $B$, denoted by $\Gamma = \Gamma(B)$ is the convex envelope of 
\[\bigcup_{I\in B}\left(I+\rrr_{\ge 0}^n\right).\]
Its boundary is called the \emph{Newton boundary}. The dimension $1$ faces of $\Gamma$ are called \emph{edges} and the codimension $1$ one are \emph{facets}. The Newton polyhedron of $\eta$ is, by definition, the Newton polyhedron of its support and denoted by $\Gamma(\eta)$.
\end{defn}

\nocite{fernandez2022characterization} \nocite{loray:hal-00016434}

\begin{ex} \label[ex]{ex1.2}
    If $p=1$ and $n=2$, we denote by $X=(x,y)$ the standard coordinates on $\ccc^2$. Consider the $1$-form defined by $\eta(x,y) = (2x^2+xy)dx+(x^2+y^2+x)dy$. We express it logarithmically as in \cref{powseries}:
    \begin{equation*}
        \eta(x,y) = (2x^3+x^2y)\frac{dx}{x}+(x^2y+y^3+xy)\frac{dy}{y}
    \end{equation*}

\medskip

\noindent The support is $\supp \eta =\{(3,0),(2,1),(0,3),(1,1)\}$. The monomials $xydx$ and $x^2dy$ contribute to the same point $(2,1)$. In the figure below, the Newton polyhedron of $\eta$ and its boundary are represented. The latter is the union of four facets $F_1$, $F_2$, $F_3$ and $F_4$, of which $F_2$ and $F_3$ are the compact ones.

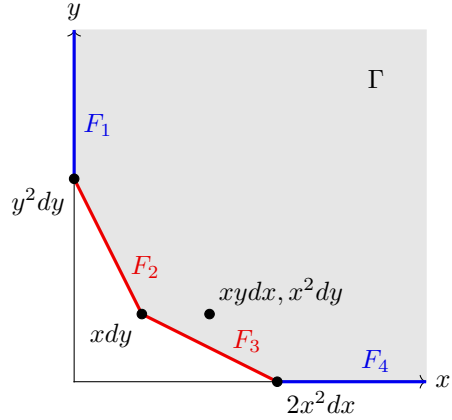
\begin{figure}[hbt!]
\centering

\begin{tikzpicture}[scale = 0.9]
	\draw[->] (-0.01,0) -- (5.2,0) node[right] {$x$};
	\draw[->] (0,-0.01) -- (0,5.2) node[above] {$y$};

    \draw[-, very thick, color = blue] (0.,3) -- (0,5.2); 
    \draw[color = blue, opacity = 0] (0,4.1) circle (2pt) node [anchor = north west, color = blue, opacity = 1] {$F_1$};
    \draw[domain=3:5.2, very thick, color = blue] plot (\x,{0}); 
    \draw[color = blue, opacity = 0] (4.1,0) circle (2pt) node [anchor = south west, color = blue, opacity = 1] {$F_4$};
	\draw[domain=0:1, very thick, color = red] plot (\x,{3-2*\x}); 
    \draw[color = red, opacity = 0] (0.7,2) circle (2pt) node [anchor = north west, color = red, opacity = 1] {$F_2$};
	\draw[domain=1:3, very thick, color = red] plot (\x,{3/2-1/2*\x});
    \draw[color = red, opacity = 0] (2.2,0.3) circle (2pt) node [anchor = south west, color = red, opacity = 1] {$F_3$};
    \draw[fill = black ,opacity = 0.1] (0,5.2)--(0,3)--(1,1)--(3,0)--(5.2,0)--(5.2,5.2)--(0,5.2);
    \draw[color = black, opacity = 0] (4.2,4.2) circle (2pt) node [anchor = south west, color = black, opacity = 1] {$\Gamma(\eta)$};

    \filldraw[black] (2,1) circle (2pt) node[anchor= south west] {$xydx,x^2dy$};
	\filldraw[black] (0,3) circle (2pt) node[anchor= north east] {$y^2dy$};
    \filldraw[black] (1,1) circle (2pt) node[anchor= north east] {$xdy$};
	\filldraw[black] (3,0) circle (2pt) node[anchor= north west] {$2x^2dx$};
	\end{tikzpicture}
    
\caption{Newton polyhedron of $\eta$.}\label{polnewton}
\end{figure}

\end{ex}

\begin{ex} \label[ex]{ex1.3}

If $p=2$ and $n=3$, let $X=(x,y,z)$ be the standard coordinate system on $\ccc^3$ and $\omega$ the $2$-form given by $\omega(x,y,z) = (z^6+xy)dx\wedge dy+xzdx\wedge dz+(x^6+x^4yz+yz)dy\wedge dz$. We write $\omega$ in a logarithmic way:
    \begin{equation*}
        \omega(x,y,z) = (xyz^6+x^2y^2)\frac{dx}x\wedge\frac{dy}y+x^2z^2 \frac{dx}x\wedge \frac{dz}z +(x^6yz+x^4y^2z^2+y^2z^2)\frac{dy}y\wedge \frac{dz}z
    \end{equation*}

\medskip

\noindent The support of the $2$-form $\omega$ is $\supp \omega =\{(1,1,6),(2,2,0),(2,0,2),(6,1,1),(4,2,2),(0,2,2)\}$. The Newton boundary of $\Gamma(\omega)$, represented in the next figure, is the union of a compact facet and six noncompact ones.

\bigskip

\begin{figure}[hbt!]
\begin{center}
\begin{tikzpicture}[x={(-5mm,-8mm)},z={(0,1cm)},y={(1cm,-.3cm)}, scale = 0.71]
\draw [->] (0,0) -- (7,0,0) node [right] {$x$};
\draw [->] (0,0) -- (0,7,0) node [above] {$y$};
\draw [->] (0,0) -- (0,0,7) node [below left] {$z$};
\draw (1,1,6) node [anchor = south west]{$z^6dx\wedge dy$};

\draw (6,1,1) node[anchor = north west]{$x^6 dy\wedge dz$};
\draw (0,2,2) node [anchor = south west]{$yz dy\wedge dz$};
\draw (2,2,0) node [anchor = north west]{$xydx\wedge dy$};
\draw (2,0,2) node [anchor = north east]{$xz dx\wedge dz$};

\draw[fill = blue, opacity = 0.4, shade] (0,2,2)--(2,0,2)--(2,0,7)--(0,2,7);
\draw[fill = blue,opacity = 0.4, shade] (2,0,2)--(2,2,0)--(7,2,0)--(7,0,2);
\draw[fill = blue,opacity = 0.4, shade] (2,0,2)--(2,0,7)--(7,0,7)--(7,0,2);
\draw[fill = red,opacity = 0.4, shade] (2,0,2)--(0,2,2)--(2,2,0);

\draw[fill = blue ,opacity = 0.4, shade] (2,2,0)--(0,2,2)--(0,7,2)--(2,7,0);
\draw[fill = blue ,opacity = 0.4, shade] (2,2,0)--(7,2,0)--(7,7,0)--(2,7,0);
\draw[fill = blue ,opacity = 0.4, shade] (0,2,2)--(0,2,7)--(0,7,7)--(0,7,2);

\draw[color = red, opacity = 0] (6,0,6) circle (2pt) node [anchor = north west, color = blue, opacity = 1] {$F_1$};
\draw[color = red, opacity = 0] (1,1,4.5) circle (2pt) node [anchor = north west, color = blue, opacity = 1] {$F_2$};
\draw[color = red, opacity = 0] (0,4.5,4.5) circle (2pt) node [anchor = north west, color = blue, opacity = 1] {$F_3$};
\draw[color = red, opacity = 0] (6,0,2) circle (2pt) node [anchor = north west, color = blue, opacity = 1] {$F_4$};
\draw[color = red, opacity = 0] (2.3,1.5,2.7) circle (2pt) node [anchor = north west, color = red, opacity = 1] {$F_5$};
\draw[color = red, opacity = 0] (0.4,4.2,1.6) circle (2pt) node [anchor = north west, color = blue, opacity = 1] {$F_6$};
\draw[color = red, opacity = 0] (4.5,4.5,0) circle (2pt) node [anchor = north west, color = blue, opacity = 1] {$F_7$};

to
\filldraw[black] (2,0,2) circle (2pt);
\filldraw[black] (2,2,0) circle (2pt);
\filldraw[black] (0,2,2) circle (2pt);
\filldraw[black] (6,1,1) circle (2pt);
\filldraw[black] (1,1,6) circle (2pt);
\filldraw[black] (4,2,2) circle (2pt);
\filldraw[black, opacity=0] (1.95,1.75,-0.4) circle (2pt) node [anchor = south east, opacity = 1]{$x^4yz dy \wedge dz$};

\end{tikzpicture}
\end{center}
\caption{Newton boundary of $\Gamma(\omega)$.}\label{boundary}
\end{figure}
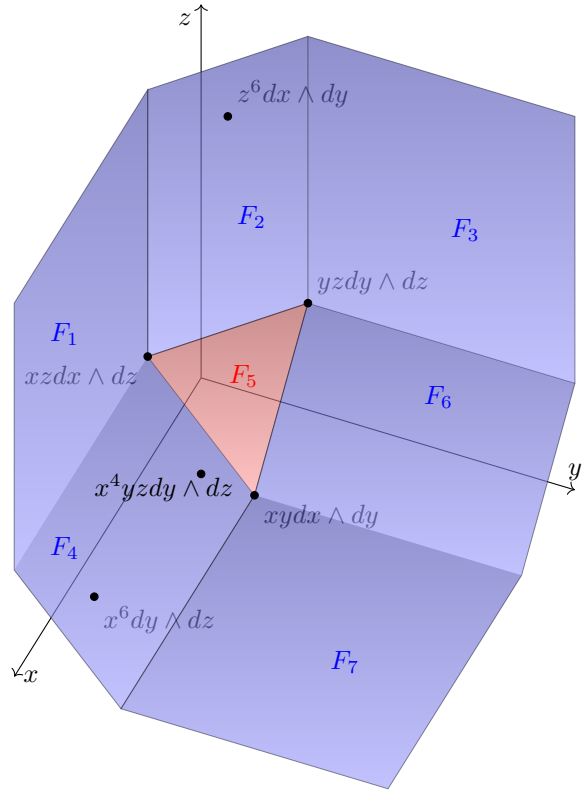
\end{ex}

\subsection{Newton nondegeneracy.}

\begin{defn}
    Let $\eta$ be a holomorphic $p$-form on $\ccc^n$ written as $\eta(X) =\sum_{|J|=p} g_J(X)dX_J$, where $g_J(X)$ are holomorphic functions. The \emph{singular locus} of $\eta$ is defined as the common zeroes of the coefficients $g_J$:
\begin{equation*}
    \sing \eta  = \left\{c\in \ccc^n \mid \forall J\subset\ \{1,\dots,n \}, |J| = p, \; g_J(c) =0  \right\}.
\end{equation*}

\no Its elements are called \emph{singularities} or \emph{singular points} of $\eta$.

\end{defn}

\begin{defn} \label[defn]{nnd}
Let $\eta$ be a holomorphic $p$-form on $\ccc^n$. We say that $\eta$ is \emph{Newton nondegenerate} (NND) if for any nonempty face $F\subset \Gamma(\eta)$, the so called \emph{initial form} $\eta_F$ has no singularities in $\left(\ccc^{*}\right)^n$. 

\begin{rke}
    Suppose $\eta $ is expressed logarithmically as in \cref{logway}:
    \begin{equation*}
        \eta(X) =\sum_{|J|=p} \frac{f_J(X)}{X_J} dX_J.
    \end{equation*}
    \no Let $F$ be a nonempty face of $\Gamma(\eta)$ and $J\subset\{1,\dots,n\}$ with $|J|=p$. As $\left(\frac{f_J}{X_J}\right)_{|F} = \frac{\left(f_J\right)_{|F}}{X_J}$, for any $c\in\left(\ccc^{*}\right)^n$, $\left(\frac{f_J}{X_J}\right)_{|F}(c) =0$ if and only if $\left(f_J\right)_{|F}(c) = 0$. Thus, \cref{nnd} is equivalent to the following: for any nonempty face $F\subset \Gamma(\eta)$ and $J\subset\{1,\dots,n\}$ with $|J|=p$, the functions $(f_J)_{|F}$ have no common zeroes in $\left(\ccc^{*}\right)^n$.
\end{rke}

\end{defn}

\begin{ex}
    We examine \cref{ex1.2} and \cref{ex1.3}, starting by $\eta(x,y) = (2x^3+x^2y)\frac{dx}{x}+(x^2y+y^3+xy)\frac{dy}{y}$.  

    \medskip
    
     \no A practical observation is that the NND condition is automatically verified for vertices, so they don't play a role in the definition. Nevertheless, let's check it this time by computing the initial forms: $\eta_{\{(0,3)\}}(x,y) = y^2 dy$, $\eta_{\{(1,1)\}}(x,y) = xdy$ and $\eta_{\{(3,0)\}}(x,y) = 2x^2dx$. As expected, they only have singularities in the coordinate hyperplanes. For the facets, the initial forms are $\eta_{F_1}(x,y) = y^2 dy$, $\eta_{F_2}(x,y) =(y^2+x)dy$, $\eta_{F_3}(x,y) = 2x^2dx+xdy$ and $\eta_{F_4}(x,y) = 2x^2dx$. We notice that $\eta_{F_2}$ has a curve of singularities intersecting $\left(\ccc^{*}\right)^2$. Let's look at the full face $F = \Gamma(\eta)$, the associated initial form $\eta_{\Gamma(\eta)}$ is $\eta$ itself. It has two singularities: $(0,0)$ and $(-\frac 15,\frac{2}{5})\in \left(\ccc^{*}\right)^2$. We conclude that the faces $F_2$ and $\Gamma(\eta)$ prevent $\eta$ from being NND.

    \bigskip

    \noindent We can find the initial forms for $\omega$ with \cref{boundary}. Notice that the support point $(4,2,2)$ given by the monomial $x^4yzdy\wedge dz$ does not belong to the Newton boundary, it is in the interior of the Newton polyhedron. 

    \smallskip
    
    \no To begin, we consider the edges of $\Gamma(\omega)$. For instance, $\omega_{F_1\cap F_2}(x,y,z) = xz dx\wedge dz$ is a monomial. This is also true for the initial forms defined by the remaining five noncompact edges. Let's examine the compact ones, for example: $\omega_{F_4 \cap F_5}(x,y,z) = xy dx \wedge dy + xzdx \wedge dz$. Here again the NND condition is fulfilled and this will also be the case for the $2$ other segments of $F_5$. Now, considering facets: $F_1$, $F_3$ and $F_7$ define monomials. The point $(4,2,2)$ associated to the monomial $x^4yzdy\wedge dz$ may be in the support of $\eta$, but it is outside of the face $F_4$, so $\omega_{F_4}(x,y,z) = xydx \wedge dy+ xz dx\wedge dz + x^6 dy\wedge dz$, whose singular locus is the hyperplane $\{x=0\}$. We can make the same type of verifications for $F_2$, $F_5$ and $F_6$. The initial form associated to the full face $\Gamma(\omega)$ is $\omega$. Its singular locus is defined by a polynomial system whose set of solutions is the $y$-axis $\{x=z=0\}$. We conclude that $\omega$ is NND.
\end{ex}

\section{Toric background} In this section, we collect some results on normal toric varieties. We follow closely the introduction found in \cite{aroca2013torical}. A general reference on toric geometry is \cite{cox2024toric}. One can also consult the second chapter in \cite{oka1997non} for an introduction to toric morphisms.

\subsection{Cones and fans}

    Let $v_1,\dots,v_q\in \rrr^n$. The \emph{convex polyhedral cone} generated by $v_1,\dots,v_q$ is the subset
\begin{equation*}
    \sigma = \left\{\lambda_1 v_1 +\dots +\lambda_q v_q, \lambda_k\in \rrr_{\ge 0}, k=1,\dots, q\right \}\subset \rrr^n.
\end{equation*}

\no We will often shortly refer to these objects as ‘cones’. The vectors $v_k$ are called \emph{generators} of the cone. A convex polyhedral cone is said to be \emph{rational} if it has a set of generators in $\zzz^n$. A rational polyhedral cone is said to be \emph{strongly convex} if it does not contain any nontrivial linear subspace.

\medskip

\no Let $\sigma \subset \rrr^n$ be a strongly convex rational polyhedral cone. The $1$-dimensional faces of $\sigma$ are half-lines called \emph{rays} and each of them has a unique generator in $\zzz^n$ whose coordinates have greatest common divisor 1. Considering all the $1$-dimensional faces of $\sigma$, we thus define a set of vectors in $\zzz^n$ which generates $\sigma$ (see Lemma 1.2.15. in \cite{cox2024toric} for more details). We call it the set of \emph{minimal generators} of the strongly convex rational polyhedral cone $\sigma$.

\medskip

\no A strongly convex rational polyhedral cone $\sigma \subset \rrr^n$ is \emph{smooth} if its minimal generators form part of a $\zzz$-basis of $\zzz^n$.

\medskip

\no A fan $\Sigma$ in $\rrr^n$ is a finite collection of cones $\sigma \subset \rrr^n$ such that:

\begin{enumerate}
    \item[$a)$] Each $\sigma\in \Sigma$ is a strongly convex rational polyhedral cone;
    \item[$b)$] For all $\sigma \in \Sigma$, each face of $\sigma$ is also in $\Sigma$;
    \item[$c)$] For all $\sigma_1,\sigma_2 \in \Sigma$, the intersection $\sigma_1 \cap \sigma_2$ is a face of each (hence, also in $\Sigma$).
\end{enumerate}

\no Given a fan $\Sigma$ in $\rrr^n$, we will say that $\sigma\in \Sigma$ is a \emph{maximal cone} of $\Sigma$ if it is maximal for the inclusion among the cones of $\Sigma$. The fan $\Sigma$ is \emph{regular} when each of its cones are smooth (in practice, it is sufficient to check that the maximal cones are smooth). Let $\Sigma$ and $\Sigma'$ be fans in $\rrr^n$, we say that $\Sigma'$ is a refinement of $\Sigma$ if every $\sigma \in \Sigma$ is a union of cones in $\Sigma'$. We refer to \cite{cox2024toric} for the proof of the existence of regular refinements.

\begin{thm}
    Any fan in $\rrr^n$ has a regular refinement.
\end{thm}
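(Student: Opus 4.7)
The plan is to achieve regularity in two stages: first reduce to a simplicial fan, then refine each simplicial cone until smoothness. Throughout, the main tool is the \emph{star subdivision} of a fan $\Sigma$ along a primitive lattice vector $v$ lying in the relative interior of some cone $\tau \in \Sigma$: one replaces every cone $\sigma \in \Sigma$ containing $\tau$ by the cones generated by $v$ together with the faces of $\sigma$ not containing $\tau$. This operation produces a new fan $\Sigma^*(v)$ that refines $\Sigma$ and agrees with it outside the star of $\tau$.

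For the first stage, I would call a cone \emph{simplicial} if its minimal generators are $\mathbb{R}$-linearly independent. Given a non-simplicial cone $\sigma \in \Sigma$ of minimal dimension among such offenders, pick any primitive lattice vector on a ray of $\sigma$ and star-subdivide. A short calculation shows that every new maximal cone created inside $\sigma$ has strictly fewer minimal generators than $\sigma$, while cones of smaller dimension are untouched; so finite iteration yields a simplicial refinement. (The choice of ray is irrelevant for this dimensional decrease — any ray of $\sigma$ works.)

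For the second stage, given a simplicial cone $\sigma$ with minimal generators $v_1,\dots,v_d$, define its \emph{multiplicity} $\mathrm{mult}(\sigma)$ to be the index $[\,N_\sigma : \mathbb{Z} v_1 + \dots + \mathbb{Z} v_d\,]$, where $N_\sigma = \mathbb{Z}^n \cap \mathrm{span}_\mathbb{R}(\sigma)$. By the elementary divisors theorem, $\sigma$ is smooth exactly when $\mathrm{mult}(\sigma)=1$. If $\mathrm{mult}(\sigma) > 1$, then the semi-open parallelepiped $\{\sum t_i v_i : 0\le t_i < 1\}$ contains a nonzero lattice point $v \in N_\sigma$; rescaling to primitive we may assume $v = \sum t_i v_i$ with $0 \le t_i < 1$. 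Star-subdividing $\Sigma$ at $v$ replaces $\sigma$ by the $d$ cones $\sigma_i = \mathrm{cone}(v_1,\dots,\widehat{v_i},\dots,v_d,v)$ for those $i$ with $t_i \ne 0$. A direct computation with the exterior product (or a determinant argument) gives $\mathrm{mult}(\sigma_i) = t_i \cdot \mathrm{mult}(\sigma) < \mathrm{mult}(\sigma)$, and the new simplicial cones strictly contained in the old simplicial fan inherit simpliciality.

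Finally, I would run induction on the total multiplicity $\sum_{\sigma\in\Sigma_{\max}} \mathrm{mult}(\sigma)$ of a simplicial fan: each star subdivision strictly decreases this positive-integer invariant, so the process terminates with every maximal cone of multiplicity $1$, i.e.\ smooth; smoothness of the faces then follows automatically since any face of a smooth cone is smooth. The only delicate point — and the step I expect to require the most care to write cleanly — is verifying the multiplicity drop $\mathrm{mult}(\sigma_i) = t_i\,\mathrm{mult}(\sigma)$, because one must keep track of the lattice $N_{\sigma_i}$ versus $N_\sigma$ and ensure that the newly introduced generator $v$ is indeed primitive in the ambient $\mathbb{Z}^n$. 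Everything else is bookkeeping on star subdivisions, which are already packaged in \cite{cox2024toric}.
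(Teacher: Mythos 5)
The paper itself does not prove this statement --- it explicitly defers to \cite{cox2024toric} --- and your two-stage plan (simplicialize by star subdivisions along rays, then kill multiplicities by star subdivisions at lattice points of the semi-open parallelepiped) is exactly the argument given there. Stage 1 is fine (the minimality of $\dim\sigma$ is what guarantees that the facets $F$ not containing the chosen ray are simplicial, so that $\cone(v,F)$ is simplicial with $\dim\sigma<$ the number of generators of $\sigma$), and so is the multiplicity formula $\mathrm{mult}(\sigma_i)=t_i\,\mathrm{mult}(\sigma)$. The genuine gap is your termination argument for stage 2: the total multiplicity $\sum_{\sigma\in\Sigma_{\max}}\mathrm{mult}(\sigma)$ is \emph{not} strictly decreased by a star subdivision. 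An affected maximal cone of multiplicity $m$ is replaced by the cones $\sigma_i$ with $t_i\neq 0$, whose multiplicities sum to $\left(\sum_i t_i\right)m$, and nothing forces $\sum_i t_i<1$. Already in $\rrr^2$, subdividing $\cone\bigl((1,0),(1,2)\bigr)$ at $v=(1,1)=\tfrac12(1,0)+\tfrac12(1,2)$ yields two smooth cones, so the total stays at $2$; worse, in $\rrr^3$, subdividing $\sigma=\cone(e_1,e_2,e_1+e_2+2e_3)$, which has multiplicity $2$, at the primitive point $v=(1,1,1)=\tfrac12\bigl(e_1+e_2+(e_1+e_2+2e_3)\bigr)$ of its parallelepiped yields three cones of multiplicity $1$, so your invariant \emph{increases} from $2$ to $3$. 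As stated, the induction does not terminate.

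The repair is standard and small: induct on the pair $\bigl(\max_\sigma\mathrm{mult}(\sigma),\,\#\{\sigma\text{ maximal}:\mathrm{mult}(\sigma)\text{ is maximal}\}\bigr)$, ordered lexicographically. If the maximal multiplicity is $m>1$, choose $\sigma$ attaining it and $v$ primitive and nonzero in its parallelepiped; every maximal cone $\sigma'$ affected by the star subdivision contains the face $\tau$ with $v$ in its relative interior, hence contains the generators of $\tau$ among its own minimal generators, so the same determinant computation shows each new piece has multiplicity $t_i\,\mathrm{mult}(\sigma')<\mathrm{mult}(\sigma')\le m$. No cone of multiplicity $\ge m$ is created and at least one (namely $\sigma$) is destroyed, so the lexicographic invariant strictly drops. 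With this substitution your proof is complete and coincides with the one in the cited reference.
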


\begin{defn}
    Let $\eta$ be a nonidentically zero holomorphic $p$-form on $\ccc^n$. For any nonempty face $F\subset \Gamma(\eta)$, the normal vectors to the facets of $\Gamma(\eta)$ containing $F$ which are in the orthant $\rrr_{\ge 0}^n$ generate a cone denoted by $C_F$. The set of all the such cones
\begin{equation*}
    \Sigma(\eta) = \{C_F, \; F \text{ is a nonempty face of } \Gamma(\eta)\}
\end{equation*}

\no is a fan called the \emph{dual fan} of $\eta$.
\end{defn}

\subsection{Varieties and morphisms} \label{varsandmorphs}  We refer to \cite{cox2024toric} and \cite{oka1997non} for the proofs of the following results. For our purposes, we will only need the expression of a toric morphism in affine charts. 

\begin{defn}
    A \emph{toric variety} of dimension $n$ is an irreducible complex algebraic variety $V$ containing a torus $T_N\cong \left(\ccc^{*}\right)^n$ as a Zariski open subset such that the action of $T_N$ on itself extends to an action $T_N\times V \longrightarrow V$ given by a morphism of algebraic varieties.
\end{defn}

\no To any fan $\Sigma$ in $\rrr^n$, we can associate a normal separated toric variety $X_{\Sigma}$ (Theorem 3.1.5 in \cite{cox2024toric}). Moreover, if the fan $\Sigma$ is regular, then $X_{\Sigma}$ is a smooth variety.

\begin{defn}
    Let $\Sigma$ and $\Sigma'$ be fans in $\rrr^n$. A morphism $\pi : X_{\Sigma} \longrightarrow X_{\Sigma'}$ is \emph{toric} if $\pi$ maps the torus $T\subset X_{\Sigma}$ into the torus $T'\subset X_{\Sigma'}$ and $\pi_{|T}$ is a group morphism.
\end{defn}

\begin{thm} \label[thm]{refinement}
    Let $\Sigma$ and $\Sigma'$ be fans in $\rrr^n$. If $\Sigma$ is a refinement of $\Sigma'$, then it induces a proper birational toric morphism $\pi : X_{\Sigma} \longrightarrow X_{\Sigma'}$.
\end{thm}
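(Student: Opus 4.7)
The plan is to construct $\pi$ chart-by-chart using the functoriality of the toric variety construction, and then verify the two qualitative properties (birational and proper) separately from the construction. Since $\Sigma$ refines $\Sigma'$, for every maximal cone $\sigma\in\Sigma$ there exists $\sigma'\in\Sigma'$ with $\sigma\subset\sigma'$. Dualizing gives an inclusion of lattice semigroups $(\sigma')^{\vee}\cap M \hookrightarrow \sigma^{\vee}\cap M$, where $M=\zzz^n$ is the dual lattice. This inclusion of semigroups induces an inclusion of semigroup algebras $\ccc[(\sigma')^{\vee}\cap M]\hookrightarrow\ccc[\sigma^{\vee}\cap M]$ and therefore a morphism of affine charts $\mathcal{U}_\sigma \longrightarrow \mathcal{U}_{\sigma'}$. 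A standard verification, using that these morphisms are compatible with face inclusions (faces of $\sigma$ are again contained in cones of $\Sigma'$ because $\Sigma$ and $\Sigma'$ are fans and $\Sigma$ refines $\Sigma'$), shows that the local morphisms glue into a single morphism $\pi:X_\Sigma\longrightarrow X_{\Sigma'}$.

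Next I would check that $\pi$ is toric and birational. Both $X_\Sigma$ and $X_{\Sigma'}$ contain the torus $T_N\cong(\ccc^{*})^n$ as a dense Zariski open subset, namely the affine chart associated with the zero cone, which belongs to every fan. By construction, on this common torus the morphism $\pi$ is induced by the identity map of $M$, hence it is the identity of $T_N$. This simultaneously shows that $\pi$ restricts to a group morphism $T_N\to T_N$ (so it is toric in the sense of the definition given above) and that $\pi$ is an isomorphism on a dense open subset (so it is birational).

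The remaining and genuinely non-trivial step is properness. I would invoke the combinatorial criterion for properness of a toric morphism: if a toric morphism $X_\Sigma\to X_{\Sigma'}$ comes from a compatible lattice map $\phi:N\to N'$, it is proper if and only if $\phi^{-1}\bigl(|\Sigma'|\bigr)=|\Sigma|$, where $|\Sigma|=\bigcup_{\sigma\in\Sigma}\sigma$ is the support. In our situation $\phi=\mathrm{id}$ and, by the very definition of a refinement, every cone of $\Sigma'$ is the union of cones of $\Sigma$, so $|\Sigma|=|\Sigma'|$ and the criterion applies trivially. The main obstacle therefore lies entirely in the properness criterion itself, whose proof (via the valuative criterion applied to one-parameter subgroups $\ccc^{*}\to T_N$) is the genuinely toric ingredient; I would simply cite the corresponding result in \cite{cox2024toric}, as the paper already indicates for the other statements of this subsection.
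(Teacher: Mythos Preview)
Your proposal is correct and is precisely the standard argument found in the references the paper cites. Note that the paper does not give its own proof of this statement at all: the subsection opens with ``We refer to \cite{cox2024toric} and \cite{oka1997non} for the proofs of the following results,'' and \cref{refinement} is stated without proof. Your sketch (semigroup inclusion from $\sigma\subset\sigma'$, gluing, identity on the torus for birationality, and the support criterion $|\Sigma|=|\Sigma'|$ for properness) is exactly the argument in those sources, so there is nothing to compare beyond observing that you have supplied what the paper deliberately omits.
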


\no Suppose that in the previous theorem, $\Sigma'$ is the fan whose only maximal cone is the orthant $\rrr_{\ge 0}^n$, then $X_{\Sigma'} = \ccc^n$. Let $\mathcal{U}_{\sigma}$ be an affine chart of $X_{\Sigma}$ defined by a maximal cone $\sigma$ with minimal generators $v_1,\dots,v_n$. For any $k\in\{1,\dots,n\}$, we can write $v_k = (v_{1k},\dots,v_{nk})\in \nnn^n$. Let $M$ be the matrix whose columns are given by the coordinates of the generators:
    \begin{equation*}
          M  = \bordermatrix{
           & v_{1} & v_{2} & \dots & v_{n} \cr
     & v_{11} & v_{12} & \dots & v_{1n} \cr
     & v_{21} & v_{22} &  \dots & v_{2n} \cr
     & \vdots & \ddots & \vdots & \vdots \cr
     & v_{n1} & v_{n2} & \dots & v_{nn} \cr
  }
      \end{equation*}

\smallskip

\no Let $Y=(y_1,\dots,y_n)$  the coordinates which are canonically associated to the vectors $v_1, \dots, v_n$ on $\mathcal{U}_\sigma$, then the morphism $\pi_{|\mathcal{U}_\sigma}$ is given by $X=Y^M$.

\section{Reduction of singularities}

\nocite{cano2013theorie}  \nocite{duqueelim} \nocite{cano2022truncated} \nocite{camacho1984topological}

\subsection{Toric reduction for NND $p$-forms}

\begin{thm} \label[thm]{Reduction theorem}
Let $\eta$ be a nonidentically zero holomorphic $p$-form defined on $\ccc^n$. We denote by $\Gamma = \Gamma(\eta)$ its Newton polyhedron and by $\Sigma$ a regular refinement of the dual fan of $\eta$. Let $\pi : X_\Sigma \longrightarrow \ccc^n$ be the associated proper birational toric morphism. For every affine chart $\mathcal{U}_\sigma$ of $X_\Sigma$ defined by a maximal cone $\sigma\in \Sigma$ with minimal generators $v_1,\dots,v_n$, set $t_k=\min_{I\in \Gamma(\eta)}\langle v_k,I \rangle$, where $k\in\{1,\dots,n\}$ and $T=(t_1,\dots,t_n)$ and denote by $Y=(y_1,\dots,y_n)$  the coordinates which are canonically associated to the vectors $v_1, \dots, v_n$ on $\mathcal{U}_\sigma$. Set $A = \{k\in\{1,\dots,n\}, t_k > 0\}$.
   
\begin{enumerate}
        \item[$1)$] The pull-back $\pi^{*}\eta$ can be expressed locally in every affine chart $\mathcal{U}_\sigma$ as 
        \[\pi^{*}\eta(Y) = Y^T\left( \sum_{|K| =  p}  \overline{f}_K(Y) \frac{dY_K}{Y_K}\right)\]
        where each coefficient $\overline{f}_K(Y)$ is holomorphic and divisible by the monomial $Y_{K\setminus A} = \prod_{k\in K\setminus A} y_k$.
        \item[$2)$] $\eta$ is Newton nondegenerate if and only if in every affine chart $\mathcal{U}_\sigma$, the coefficients $\overline{f}_K(Y)$ of the pull-back $\pi^{*}\eta$ have no common zeroes.
\end{enumerate}
\end{thm}

\begin{proof}

     \no  We will keep the same notations as in \cref{varsandmorphs}. We want to compute the pull-back of $\eta$ by $\pi$ in the chart $\mathcal{U}_{\sigma}$. The toric morphism $\pi$ is expressed in this chart by the change of variables $X = Y^{M}$ :
    \begin{equation}\label{chvar}
      \begin{cases}
  x_1 = y_1^{v_{11}}\cdots y_n^{v_{1n}}\\      
 x_2 = y_1^{v_{21}}\cdots y_n^{v_{2n}}\\
 \;\;\;\;\;\vdots \\
  x_n = y_1^{v_{n1}}\cdots y_n^{v_{nn}}
\end{cases}
\end{equation}

    \smallskip
      
   \no   For all $j\in\{1,\dots,n\}$, taking logarithmic differentials in \cref{chvar}, we obtain a change of base formula:
\begin{equation}\label{chbase1}
    \frac{dx_j}{x_j} = \sum_{k=1}^n v_{jk} \frac{dy_k}{y_k}.
\end{equation}

\noindent Let $J$ be a subset of $\{1,\dots,n\}$ with $p$ elements $j_1<\dots<j_p$, using \cref{chbase1}, we find

\begin{equation*}
    \frac{dX_J}{X_J} = \bigwedge_{s=1}^{p} \left(\sum_{k=1}^n v_{j_sk} \frac{dy_k}{y_k}\right),
\end{equation*}

\smallskip

\noindent from which we derive a change of base formula for $p$-logarithmic differentials:

\begin{equation}\label{chbase2}
    \frac{dX_J}{X_J}=\sum_{|K| =  p} |M_{J,K}| \frac{dY_K}{Y_K},
\end{equation}

\smallskip

\noindent where $|M_{J,K}|$ is the determinant of the submatrix of $M$ whose rows and columns are indexed respectively by $J$ and $K$. Recall that $\eta$ is expressed in the standard coordinate system $X$ as in \cref{logway}:

\begin{equation*}
\eta(X) = \sum_{|J|=p}f_J(X)\frac{dX_J}{X_J}.
\end{equation*}

\smallskip

\noindent Combining \cref{chbase2} with the previous one, we start to compute the total transform of $\eta$ by $\pi$:

\begin{equation*}
    \begin{gathered}
        \pi^{*}\eta(Y) = \sum_{|J|=p}f_J(Y^M)\sum_{|K| =  p} |M_{J,K}| \frac{dY_K}{Y_K} \\
        \pi^{*}\eta(Y) = \sum_{|K|=p}\left(\sum_{|J| =  p} |M_{J,K}| f_J(Y^M)\right) \frac{dY_K}{Y_K}.
    \end{gathered}
\end{equation*}

\smallskip

\noindent Let us expand the coefficients $f_J$ into power series, keeping the same notations as in \cref{powseries}:

\begin{equation*}
    \pi^{*}\eta(Y) = \sum_{|K|=p}\left(\sum_{|J| =  p} |M_{J,K}| \left(\sum_{I\in \nnn^n\cap \Gamma} a_{I,J}Y^{I\cdot M}\right)\right) \frac{dY_K}{Y_K},
\end{equation*}

\smallskip

\noindent where the dot $\cdot$ denotes the matrix product. 

\begin{equation*}
    \pi^{*}\eta(Y) = \sum_{I\in \nnn^n\cap \Gamma}\left(\sum_{|K|=  p}\sum_{|J|=p}  a_{I,J} |M_{J,K}| \frac{dY_K}{Y_K}\right) Y^{ I\cdot M}.
\end{equation*}

\medskip

\noindent For any $I\in \nnn^n\cap \Gamma$, recall that $Y^{ I\cdot M} = y_1^{\langle v_1,I \rangle}\dots y_n^{\langle v_n,I \rangle}$. For all $k\in\{1,\dots,n\}$, we have $t_k = \min_{I\in \Gamma} \langle v_k,I \rangle$ and we denote by $F_k$ the face of $\Gamma$ defined by $v_k$, namely $F_k = \{I\in \Gamma, \langle v_k, I \rangle = t_k\}$. Let $A = \{k\in\{1,\dots,n\}, t_k > 0\}$. We consider the divisor $\left(\prod_{k\in A}y_k= 0\right)$. This leads us to factor the previous expression by $Y^T$, where $T=(t_1,\dots,t_n)$:

\begin{equation*}
    \pi^{*}\eta(Y) = Y^T\Biggl(\sum_{|K|=p}
\Bigl(
\underbrace{
\sum_{I\in \nnn^n\cap \Gamma}\sum_{|J| =  p} a_{I,J} |M_{J,K}| Y^{I\cdot M - T}
}_{\overline{f}_K(Y)}
\Bigr)
\frac{dY_K}{Y_K}
\Biggr).
\end{equation*}

\medskip

\no For any $K\subset \{1,\dots,n\}$ such that $|K|=p$, we can rewrite $\overline{f}_K(Y)$ with dot products:

\begin{equation} \label{dotprod}
    \overline{f}_K(Y)=\sum_{I\in \nnn^n\cap \Gamma}\sum_{|J| =  p}  a_{I,J} |M_{J,K}| y_1^{ \langle v_1,I\rangle -t_1}\dots y_n^{ \langle v_n,I \rangle -t_n}.
\end{equation}

\smallskip

\no \textbullet \; \textbf{Proof of statement $1)$.}

\medskip

\no Let $K\subset \{1,\dots,n\}$ such that $|K|=p$ and $k\in K\setminus A$, we will show that $y_k$ divides $\overline{f}_K(Y)$. To do so, we substitute $y_k = 0$ in \cref{dotprod}:

\begin{equation*} 
    \overline{f}_K(Y)_{|y_k=0}=\sum_{I\in F_k}\sum_{|J| =  p}  a_{I,J} |M_{J,K}| \prod_{s\ne k} y_s^{ \langle v_s,I\rangle -t_s}.
\end{equation*}

\smallskip

\no Let $I\in F_k$ and $J\subset \{1,\dots,n\}$ with $|J|=p$. We will justify that the product $a_{I,J}|M_{J,K}|$ is equal to zero. This will show that the sum above is also zero and that $y_k$ divides $\overline{f}_K(Y)$. Recall that  $a_{I,J} = 0$ whenever a coordinate of $I$ indexed by $J$ is zero (this has been stated after \cref{powseries}). Suppose now that all coordinates of $I$ indexed by $J$ are nonzero and let us prove that $|M_{J,K}|=0$.

\medskip

\no As $k\notin A$, we have $t_k=\min_{I\in\Gamma} \langle v_k,I\rangle = 0$. This implies that $v_k$ has at least one coordinate which is $0$. Let $\Lambda=\{j\in\{1,\dots,n\}, v_{jk} >0\}$. The face $F_k=\{I\in \Gamma, \langle v_k,I \rangle = 0\}$ is the intersection of $\Gamma$ with coordinate hyperplanes, namely: $F_k=\{I=(i_1,\dots,i_n)\in \Gamma, \forall s\in \Lambda, i_s = 0\}$. The matrix $M$ can be written as:

\newenvironment{mypmatrix}{\def\mathstrut{\vphantom{\big(}}\pmatrix}{\endpmatrix}

\begin{equation*}
    M = \bordermatrix{     
            & v_{1}     & \cdots    & v_k   & \cdots & v_n \cr
         & v_{11}     & \ldots    & *  & \ldots  & v_{1n}  \cr
         & \vdots     & \ldots     & *  &  \ldots  & \vdots \cr
         & \vdots  & \ldots       & \vdots     & \ldots  & \vdots \cr
         & \vdots     & \ldots      & \vdots   & \ldots  & \vdots \cr
         & v_{n1}     & \ldots      & *   & \ldots  & v_{nn} \cr
}.
\end{equation*}

\medskip

\no The coefficient $|M_{J,K}|$ is the determinant of the matrix whose rows are indexed by $J$ and columns are indexed by $K$. As $k\in K$, the $k$-th column of $M$ will be part of this extracted matrix. The elements of $\Lambda$ do not belong to $J$. Indeed, if there was an element $s\in \Lambda\cap J$, as $F_k=\{I=(i_1,\dots,i_n)\in \Gamma, \forall s\in \Lambda, i_s = 0\}$ and $I\in F_k$, the $s$-th coordinate of $I$ would be indexed by an element of $J$ and equal to zero. This contradicts our hypothesis. In the end, we get that all nonzero elements of the $k$-th column are placed in rows indexed by $\{1,\dots, n\}\setminus J$. Thus, the submatrix that we consider has a column of zeroes and its determinant $|M_{J,K}|$ is zero.

\medskip

\no For any $k\in K\setminus A$, $y_k$ divides $\overline{f}_K(Y)$, so $Y_{K\setminus A}$ divides $\overline{f}_K(Y)$. 

\medskip

\no \textbullet \; \textbf{Proof of the direct implication in statement $2)$.} 

\medskip

\no Suppose that there exists a point $c=(c_1,\dots,c_n)\in \ccc^n$ which is a common zero for the coefficients $\overline{f}_K(Y)$. Let $S =\{k\in\{1,\dots,n\}, c_k\ = 0\}$. 

\medskip

\nocite{brondsted2012introduction}

\no Consider the nonempty face $F$ of $\Gamma$ defined by $F=\cap_{k\in S}F_k$. Since $c$ is a common zero of the coefficients $\overline{f}_K(Y)$, for each subset $K\subset \{1,\dots,n\}$, using \cref{dotprod}, we get:

\begin{equation*}
    \sum_{I\in F} \sum_{|J|=p}a_{I,J}|M_{J,K}| \prod_{k\notin S} c_k^{ \langle v_k,I\rangle -t_k} = 0 
\end{equation*}
\begin{equation} \label{cancellation}
    \sum_{|J|=p} \left(\sum_{I\in F} a_{I,J} \prod_{k\notin S} c_k^{ \langle v_k,I\rangle}\right)|M_{J,K}|  = 0.
\end{equation}

\medskip

\no We observe that for any $I\in\Gamma \cap\nnn^n$,

\begin{equation*}
    \prod_{k\notin S} c_k^{ \langle v_k,I\rangle}  = \prod_{k\notin S}c_k^{i_1 v_{1k}+\dots+i_nv_{nk}} = \prod_{j=1}^n\prod_{k\notin S} (c_k^{v_{jk}})^{i_j} = C^I.
\end{equation*}

\medskip

\no where $C=(\prod_{k\notin S}c_k^{v_{1k}},\dots,\prod_{k\notin S}c_k^{v_{nk}}) \in (\ccc^{*})^n$. This helps us further simplify \cref{cancellation}:

\begin{equation*}
    \forall K, \; \sum_{|J|=p} \left(\sum_{I\in F} a_{I,J} C^I\right)|M_{J,K}|  = 0
\end{equation*} 
\begin{equation} \label{eqzerobis}
    \sum_{|J|=p} (f_J)_{|F}(C)|M_{J,K}|  = 0
\end{equation}

\no Let $J_1 \prec \dots \prec J_{r}$ be an ordering of the subsets of $\{1,\dots,n\}$ with $p$ elements, where $r=\binom{n}{p}$.

\no We introduce the following matrix: 
\begin{equation*}
    M'=\begin{pmatrix}
    |M_{J_1,J_1}| & |M_{J_1,J_2}| & \dots & |M_{J_1,J_r}| \\
    |M_{J_2,J_1}| & |M_{J_2,J_2}| & \dots & |M_{J_2,J_r}| \\
    \vdots & \vdots & \ddots & \vdots \\
    |M_{J_r,J_1}| & |M_{J_r,J_2}| & \dots & |M_{J_r,J_r}|
 \end{pmatrix}.
\end{equation*}

\smallskip

\no We proceed to reformulate the $r=\binom{n}{p}$ equalities in \cref{eqzerobis} as a matrix equation:
\begin{equation} \label{matrix}
    \left((f_{J_1})_{|F}(C),\dots, (f_{J_r})_{|F}(C)\right) M' = 0.
\end{equation}

\no Since $\sigma$ is a smooth cone, $M$ is invertible. Let us prove that $M'$ is also invertible. Denote by $(e_1, \dots, e_n)$ the canonical basis of $\ccc^n$ and $u : \ccc^n \longrightarrow \ccc^n$ the $\ccc$-linear map represented by the matrix $M$ in this basis. For any subset $J\subset\{1,\dots,n\}$ such that $J = \{j_1,\dots, j_p\}$ and $j_1<\dots < j_p$, set $e_J = e_{j_1}\wedge \dots \wedge e_{j_p}$. The $p$-th exterior power $\wedge^{p}u$ of the map $u$ is defined by $\wedge^p u(e_J) = u(e_{j_1})\wedge \dots \wedge u(e_{j_p})$. For any subset $K\subset \{1,\dots,n\}$ with $p$ elements $k_1<\dots<k_p$,

\begin{equation*}
    \wedge^p u(e_{K}) = \bigwedge_{\ell = 1}^pu(e_{k_\ell}) = \bigwedge_{\ell = 1}^p\left(\sum_{i=1}^nv_{ik_{\ell}}e_i\right) = \sum_{|J|= p }|M_{J,K}| e_J.
\end{equation*}

\medskip

\no The family of vectors $(e_{J_1},\dots,e_{J_r})$ is a basis of $\wedge^p \ccc^n$. For any $\ell \in\{1,\dots, r\}$,

\begin{equation*}
    \wedge^p u(e_{J_\ell}) = \sum_{s=1}^r|M_{J_s,J_\ell}| e_{J_s}.
\end{equation*}

\medskip

\no This proves that $M'$ is the matrix of $\wedge^p u$ in the basis $(e_{J_1},\dots,e_{J_r})$. The exterior power of an invertible linear endomorphism is invertible. Thus, $\wedge^p u$ is invertible and so is the matrix $M'$.

\medskip

\no This implies that the unique solution of \cref{matrix} is
\begin{equation*}
    \left((f_{J_1})_{|F}(C),\dots, (f_{J_r})_{|F}(C)\right)=(0,\dots,0)
\end{equation*}

\smallskip

\no and that $C\in \left(\ccc^{*}\right)^n$ is a common zero for the initial forms $(f_{J_k})_{|F}$. This is impossible because $\eta$ is NND, so the coefficients $\overline{f}_K(Y)$ have no common zeroes.

\medskip

\no \textbullet \; \textbf{Proof of the inverse implication in statement $2)$.} 

\medskip

\no Let's suppose that $\eta$ is not NND and find an affine chart where the coefficients of the total transform have a common zero. By hypothesis, there exists a nonempty face $F\subset \Gamma(\eta)$ and a point $c=(c_1,\dots,c_n)\in \left(\ccc^{*}\right)^n$ where all the initial forms $(f_J)_{|F}$ vanish. 

\medskip
    
\no As $\Sigma$ is a regular refinement of the dual fan of $\Gamma(\eta)$, we can always find an affine chart which "captures" the face $F$. More precisely: there is a chart $\mathcal{U}_\sigma$ defined by a maximal cone $\sigma\in \Sigma$ with minimal generators $v_1,\dots, v_n$ such that some vectors $v_k$ define $F$: 

\[F = \bigcap_{k\in R} F_k\]

\no where $R\subset \{1,\dots,n\}$. Set $c' = c^{M^{-1}}$. For any $k\in \{1,\dots,n\}$, set $\tilde{c}_k =0$ if $k\in R$ and $\tilde{c}_k= c'_k$ if $k\notin R$. Let $\tilde{c} = (\tilde{c}_1,\dots,\tilde{c}_n)$. 

\medskip

\no Using the expression of the coefficients $\overline{f}_K$ in \cref{dotprod}, we evaluate at $\tilde{c}$:
\begin{gather*}
    \forall K, \; \overline{f}_K(\tilde{c}) = \sum_{I\in \nnn^n\cap \Gamma}\sum_{|J| =  p} a_{I,J} |M_{J,K}|\tilde{c}_1^{ \langle v_1,I\rangle -t_1}\dots \tilde{c}_n^{ \langle v_n,I \rangle -t_n} \\ 
    \overline{f}_K(\tilde{c}) = \sum_{I\in F}\sum_{|J| =  p} a_{I,J} |M_{J,K}|\prod_{k\notin R} \tilde{c}^{\langle v_k,I \rangle -t_k}_k \\
    \overline{f}_K(\tilde{c}) = \sum_{|J| =  p} |M_{J,K}|\sum_{I\in F} a_{I,J} \prod_{k\notin R} c^{\prime \langle v_k,I \rangle-t_k}_k
\end{gather*}

\no For any $I\in F$ and $k\in R$, $c^{\prime \langle v_k,I \rangle-t_k}_k = (c'_k)^0 = 1$, hence we have $\prod_{k\notin R} c^{\prime \langle v_k,I \rangle-t_k}_k = \prod_{k= 1} ^n c^{\prime \langle v_k,I \rangle-t_k}_k $.
\begin{gather*}
    \overline{f}_K(\tilde{c}) = \sum_{|J| =  p} |M_{J,K}|\sum_{I\in F} a_{I,J} \prod_{k= 1} ^n c^{\prime \langle v_k,I \rangle-t_k}_k \\ 
    \overline{f}_K(\tilde{c}) = \sum_{|J| =  p} |M_{J,K}|\sum_{I\in F} a_{I,J} \left(c^{\prime}\right)^{I \cdot M - T} \\
    \overline{f}_K(\tilde{c}) = \sum_{|J| =  p} |M_{J,K}|\sum_{I\in F} a_{I,J} c^{I \cdot M \cdot M^{-1} - TM^{-1}}
    \end{gather*}

\smallskip

\no because $c' = c^{M^{-1}}$. Factoring by $c^{ - TM^{-1}}$, we find
    
\begin{gather*}
    \overline{f}_K(\tilde{c}) = c^{ - TM^{-1}} \sum_{|J| =  p} |M_{J,K}|\sum_{I\in F} a_{I,J} c^{I} \\
    \overline{f}_K(\tilde{c}) = c^{ - TM^{-1}} \sum_{|J| =  p} |M_{J,K}| (f_J)_{|F}(c) \\
    \overline{f}_K(\tilde{c}) = 0
\end{gather*}

\no since $(f_J)_{|F}(c) = 0$ by hypothesis.

\end{proof}

\subsection{An application to $(n-1)$-forms} 

\begin{defn}
    To any holomorphic $(n-1)$-form $\eta$ on $\ccc^n$ expressed in the standard coordinates by

\[\eta(X) = \sum_{|J|=n-1}g_J(X)dX_J,\]

\no we can associate by duality a vector field $\chi$ such that:

\[\chi(X) = \sum_{k=1}^n(-1)^{k-1}g_{J_k}(X)\frac{\partial}{\partial x_k},\]

\no where $J_k=\{1,\dots,n\}\setminus\{k\}$ for all $k=1,\dots,n$. A singular point $c$ of $\eta$ is said to be \emph{nonnilpotent} if the matrix of the linear part of $\chi$ at $c$ is nonnilpotent.
\end{defn}

\begin{thm} \label[thm]{n-1}
    Let $\eta$ be a nonidentically zero holomorphic $p$-form on $\ccc^n$. If we suppose that $\eta$ is NND and $p=n-1$, then the strict transform of $\eta$ by the toric morphism defined via a regular refinement of its dual fan has at worst nonnilpotent singularities.
\end{thm}

\begin{proof}

We will use the same notations as in \cref{Reduction theorem} and its proof. We first establish a general lemma.
Recall that the pull-back $\pi^{*}\eta$ can be expressed locally in an affine chart $\mathcal{U}_\sigma$ as 
        \[\pi^{*}\eta(Y) = Y^T\left( \sum_{|K| =  p}  \overline{f}_K(Y) \frac{dY_K}{Y_K}\right).\]

\begin{lm}
    The set $A=\{k\in \{1,\dots,n\},  t_k >0\}$ has at least $p$ elements.
\end{lm}

\no Let $B = \{1,\dots,n\}\setminus A$ and consider the face $G = \cap_{k\in B} F_k\subset \Gamma(\eta)$. As $G$ is nonempty, there exists $I'\in G$ and by definition, we have:
\begin{equation}
    \forall k\in B, \; \langle v_k,I' \rangle = 0. \label{subspace}
\end{equation}

\no The Newton polyhedron $\Gamma(\eta)$ is defined using the series expansion of the coefficients $f_J$ from the logarithmic expression of $\eta$. As explained below \cref{logway}, for any $J$, the monomial $X_J$ divides $f_J$. Thus, for each nonzero $a_{I,J}X^{I}$ in the series expansion of $f_J$, the $n$-tuple $I$ will have at least $p$ nonzero coordinates. We apply this to $I=I'$. Let $k\in B$, the equalities in \cref{subspace} imply that for each nonzero coordinate of $I'$, the coordinate of $v_k$ in the same position is zero. As $(v_k)_{k\in B}$ is a free family of vectors in an $(n-p)$-dimensional subspace of $\rrr^n$, we conclude that $|B|\le n-p$ and $|A|\ge p$. 

\medskip

\no \textbullet \; \textbf{Proof of Theorem 3.3} 

\medskip

\no Let us suppose that $\eta$ is NND and $p=n-1$. According to the previous lemma, $|A|\ge n-1$. If $|A|=n$, then the strict transform $\eta'$ of $\eta$ by $\pi$ can be expressed as:

\begin{gather*}
    \eta'(Y) = y_1\dots y_n\left( \sum_{|K| =  n-1}  \overline{f}_K(Y) \frac{dY_K}{Y_K}\right) \\
    \eta'(Y) = \sum_{k=1}^n y_k\overline{f}_{J_k}(Y)dY_{J_k}.
\end{gather*}

with $J_k = \{1,\dots,n\}\setminus\{k\}$ for all $k=1,\dots,n$. For any $k$, we set $h_k = \overline{f}_{J_k}$. Consider the vector field $\chi$ such that

\[\chi(Y) = \sum_{k=1}^n (-1)^{k-1} y_k h_k(Y)\frac{\partial}{\partial y_k}.\]

\no Let $c=(c_1,\dots,c_n)\in \ccc^n$ be a singular point of $\eta'$. By definition, for $k=1,\dots,n$, we have the equalities $c_kh_k(c) = 0$. As $\eta$ is NND, the second statement in \cref{Reduction theorem} guarantees that the $h_k$'s cannot all vanish at $c$. There exists $\ell$ such that $h_\ell(c)\ne 0$, so $c_\ell = 0$. We remark that $(-1)^{\ell -1}h_\ell(c)$ is a nonzero eigenvalue of the matrix of the linear part of $\chi$ at $c$. Indeed, its $\ell$-th column is


\begin{equation*}
\begin{pmatrix}
         (-1)^{\ell-1}c_\ell\partial_{y_1}h_\ell(c) \\
         \vdots \\
         (-1)^{\ell-1}c_\ell\partial_{y_{\ell -1}}h_\ell(c) \\
         (-1)^{\ell-1}(h_\ell(c)+c_\ell\partial_{y_\ell}h_\ell(c)) \\
         (-1)^{\ell-1}c_\ell\partial_{y_{\ell+1}}h_\ell(c) \\
         \vdots \\

        (-1)^{\ell-1}c_\ell\partial_{y_{n}}h_\ell(c) \\
\end{pmatrix} =
\begin{pmatrix}
         0 \\
         \vdots \\
         0 \\
         (-1)^{\ell-1}h_\ell(c) \\
         0 \\
         \vdots \\

        0 \\
\end{pmatrix}
.
\end{equation*}

\medskip

\no We conclude that the singular point $c$ is nonnilpotent.

\bigskip

\no Consider the case $|A| = n-1$. Without loss of generality, we can suppose $A=\{2,\dots,n\}$. According to the first statement in \cref{Reduction theorem}, for any subset $K\subset \{1,\dots,n\}$ with $n-1$ elements, $Y_{K\setminus A}$ divides $\overline{f}_K$. In our case, this means that for any $K\ne \{2,\dots,n\}$, $y_1$ divides $\overline{f}_K$. 

\medskip

\no The strict transform $\eta'$ of $\eta$ by $\pi$ is given by:

\begin{gather*}
    \eta'(Y) = y_2\dots y_n\left( \sum_{k = 1}^n  \overline{f}_{J_k}(Y) \frac{dY_{J_k}}{Y_{J_k}}\right) \\
    \eta'(Y) = \overline{f}_{J_1}(Y)+\sum_{k=2}^n y_k\frac{\overline{f}_{J_k}(Y)}{y_1}dY_{J_k} \\
    \eta'(Y) = h_1(Y)dY_{J_1}+\sum_{k=2}^ny_kh_k(Y)dY_{J_k}.
\end{gather*}

where $h_1 = \overline{f}_{J_1}$ and for any $k=2,\dots,n$, we set $h_k = \frac{\overline{f}_{J_k}}{y_1}$. Let $\chi$ the vector field defined by:

\[\chi(Y) = h_1(Y)\frac{\partial}{\partial y_1}+\sum_{k=2}^n (-1)^{k-1} y_k h_k(Y)\frac{\partial}{\partial y_k}.\]

\no Let $c=(c_1,\dots,c_n)\in \ccc^n$ be a singular point of $\eta'$. This means that $h_1(c) = 0$ and for $k=2,\dots,n$, $c_kh_k(c) = 0$. At least one coefficient $h_k$ with $k\ge 2$ does not vanish at $c$. Indeed, if for any $k=2,\dots,n$, $h_k(c)=0$, as $y_1h_k = \overline{f}_{J_k}$, we would have $\overline{f}_{J_k}(c) = 0$ for every $k=2,\dots,n$ and for $k=1$ also because $h_1=\overline{f}_{J_1}$. This is impossible according to \cref{Reduction theorem} since $\eta$ is NND.

\medskip

\no There exists $\ell\in \{2,\dots,n\}$ with $h_\ell(c)\ne 0$. As $c_\ell h_\ell(c)=0$, we have $c_\ell = 0$ and by the same line of reasoning as in the case $|A| = n$, we obtain that $(-1)^{\ell - 1} h_\ell(c)$ is a nonzero eigenvalue of the matrix of the linear part of $\chi$.

\end{proof}

\medskip

\bibliographystyle{halpha-abbrv.bst}
\bibliography{sample.bib}

\end{document}